\newcommand{\ignore}[1]{}
\newcommand{\hide}[1]{}
\DeclareMathOperator{\E}{E}
\DeclareMathOperator{\AH}{AH}
\newcommand{\F}{\mathbb F}
\newcommand{\Z}[0]{\mathbb Z}
\newcommand{\Q}{\mathbb{Q}}
\newtheorem{dummy}{Dummy}
\newtheorem{lemma}[dummy]{Lemma}
\newtheorem{prop}[dummy]{Proposition}
\newtheorem{cor}[dummy]{Corollary}
\theoremstyle{definition}
\newtheorem{conj}[dummy]{Conjecture}
\theoremstyle{remark}
\newtheorem{rem}[dummy]{Remark}
\newtheorem*{rem*}{Remark to ourselves}
\begin{document}

\bibliographystyle{amsalpha}

\author{Marina Avitabile}
\email{marina.avitabile@unimib.it}
\address{Dipartimento di Matematica e Applicazioni\\
  Universit\`a degli Studi di Milano - Bicocca\\
 via Cozzi 55\\
  I-20125 Milano\\
  Italy}
\author{Sandro Mattarei}
\email{smattarei@lincoln.ac.uk}
\address{Charlotte Scott Centre for Algebra\\
University of Lincoln \\
Brayford Pool
Lincoln, LN6 7TS\\
United Kingdom}

\title{On some coefficients of the Artin-Hasse series modulo a prime}

\subjclass[2020]{Primary 33E50; secondary 11B68}
\keywords{Artin-Hasse series, divided Bernoulli numbers}

\begin{abstract}
Let $p$ be an odd prime, and let
$\sum_{n=0}^{\infty} a_{n}X^{n}\in\F_p[[X]]$
be the reduction modulo $p$ of the Artin-Hasse exponential.
We obtain a polynomial expression for $a_{kp}$ in terms of those $a_{rp}$ with $r<k$,
for even $k<p^2-1$.
A conjectural analogue covering the case of odd $k<p$ can be stated in various polynomial forms,
essentially in terms of the polynomial
$\gamma(X)
=\sum_{n=1}^{p-2}(B_{n}/n)X^{p-n}$,
where $B_n$ denotes the $n$-th Bernoulli number.

We prove that $\gamma(X)$ satisfies  the functional equation
$\gamma(X-1)-\gamma(X)=\pounds_1(X)+X^{p-1}-w_p-1$
in $\F_p[X]$,
where $\pounds_1(X)$ and $w_p$ are the truncated logarithm and the Wilson quotient.
This is an analogue modulo $p$ of a functional equation, in $\Q[[X]]$, established by Zagier
for the power series $\sum_{n=1}^{\infty}(B_{n}/n)X^n$.
Our proof of the functional equation establishes a connection with a result of Nielsen of 1915,
of which we provide a fresh proof.
Our polynomial framing allows us to derive congruences for certain numerical sums
involving divided Bernoulli numbers.
\end{abstract}

\date{\today}

\maketitle

\section{Introduction}\label{sec:intro}
Let $p$ be a prime. The Artin-Hasse exponential
series is the formal power series in $\Q[[X]]$ defined as
\begin{equation}\label{eq:AH_def}
\AH(X)=\exp\biggl(\sum_{i=0}^{\infty} X^{p^i}/p^{i}\biggr)
=\prod_{i=0}^{\infty}\exp \left(X^{p^i}/p^{i}\right)=\sum_{n=0}^{\infty}u_{n}X^{n}.
\end{equation}

As an immediate application of the Dieudonn\'e-Dwork criterion, its coefficients
are $p$-integral, hence they can be evaluated modulo $p$.
Let $\E_{p}(X)=\sum a_{n}X^{n}$ denote the reduction modulo $p$
of the Artin-Hasse exponential series,
hence thought of as a series in $\F_p[[X]]$.
The coefficients of $\E_p(X)$, and how to compute them, are the main focus of this paper.

The coefficients $u_{n}$ have a remarkable combinatorial interpretation as $u_{n}=h_{n}/n!$, where $h_{n}$ is the number of $p$-power order elements in the symmetric group $S_{n}$. Explicitly
\begin{equation}\label{eq:u_n}
u_{n}=\sum_{n=k_{0}+k_{1}p+\cdots+k_{r}p^r}\frac{1}{\prod_{i=0}^{r}k_{i}!\;p^{i k_{i}}}.
\end{equation}
In particular, $u_{p}=1/p!+1/p$, and hence
$u_{p}=w_p/(p-1)!\equiv -w_p \pmod{p}$, where $w_p$ denotes the Wilson quotient. 

The coefficients $u_n\in\Q$ of the Artin-Hasse series may be computed recursively based on
\begin{equation}\label{eq:FR}
u_{n}=\frac{1}{n}\sum_{i=0}^{\infty}u_{n-p^i},
\end{equation}
where $u_{0}=1$ and we naturally read $u_{m}=0$ for $m<0$.
This recursive formula (see \cite[Lemma~1]{KS}) follows from Equation~\eqref{eq:AH_def} by differentiation.
A natural question is whether and how their residues modulo $p$, that is, the coefficients of $\E_p(X)$,
can be computed from the preceding ones avoiding reference to the rational coefficients of $\AH(X)$.
A version modulo $p$ of Equation~\eqref{eq:FR} may be used as long as $n$ is not a multiple of $p$,
thus reducing the question to the recursive calculation of the coefficients of the form $a_{kp}\in\F_p$.
Note that computing $a_{kp}$ from $u_{kp}$ using~\eqref{eq:u_n} and reducing modulo $p$ requires
supercongruences for binomial coefficients which soon become unavailable.
We give the first few such expressions for $a_{kp}$ in terms of preceding $a_{jp}$
at the beginning of Section~\ref{sec:conjecture}.
A contribution to computing $a_{kp}$ from coefficients $a_{jp}$ with $j<k$
(avoiding explicit reduction of $u_{kp}$ modulo $p$)
is the following result from~\cite{AviMat:G(x)}, but only for $k$ even.

\begin{prop}[Proposition~2 in~\cite{AviMat:G(x)}]\label{prop:X^p}
Let $p$ be an odd prime and $\E_{p}(X)=\sum_{i=0}^{\infty} a_{i}X^{i}$ in $\F_{p}[[X]]$ the reduction modulo $p$ of the Artin-Hasse exponential series.
Then we have
\begin{equation}\label{eq:+-}
\sum_{s=0}^{\infty} a_{sp}X^{sp} \sum_{r=0}^{\infty}(-1)^r a_{rp}X^{rp}\sum_{i=1}^{\infty}X^{p^i}=X^p,
\end{equation}
in $\F_{p}[[X]]$.
\end{prop}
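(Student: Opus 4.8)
My plan rests on slicing $\E_p$ by residues modulo $p$ and feeding in two facts about $\AH$ that survive reduction: from $\log\AH(X)=\sum_{i\ge 0}X^{p^i}/p^i$ and the fact that every $p^i$ is odd, $\log\AH(X)+\log\AH(-X)=0$, so $\AH(X)\AH(-X)=1$ and hence $\E_p(X)\E_p(-X)=1$ in $\F_p[[X]]$; and differentiating the logarithm, $\AH'(X)/\AH(X)=\sum_{i\ge 0}X^{p^i-1}$, i.e. Equation~\eqref{eq:FR} in the form $nu_n=\sum_i u_{n-p^i}$, which reduced modulo $p$ reads $na_n=\sum_i a_{n-p^i}$ in $\F_p$. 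I would write $\E_p(X)=\sum_{j=0}^{p-1}X^j\phi_j(X^p)$ with $\phi_j\in\F_p[[Y]]$, put $\ell(Y)=\sum_{i\ge 0}Y^{p^i}$ and $M(Y)=\sum_{i\ge 0}Y^{p^i-1}$, so that $\ell(Y)=YM(Y)$ with $M(0)=1$. Since $\sum_s a_{sp}X^{sp}=\phi_0(X^p)$, $\sum_r(-1)^ra_{rp}X^{rp}=\phi_0(-X^p)$ (using $(-X)^p=-X^p$), and $\sum_{i\ge 1}X^{p^i}=\ell(X^p)$, the statement is equivalent, writing $Y$ for $X^p$ and cancelling $Y$ in the domain $\F_p[[Y]]$, to
\begin{equation*}
\phi_0(Y)\,\phi_0(-Y)\,M(Y)=1.
\end{equation*}

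Splitting the recursion over the residue of $n$ modulo $p$ yields $\phi_{r-1}(Y)=(r-\ell(Y))\phi_r(Y)$ for $1\le r\le p-1$ and $\phi_{p-1}(Y)=-M(Y)\phi_0(Y)$; iterating, $\phi_j(Y)=-\bigl(\prod_{r=j+1}^{p-1}(r-\ell(Y))\bigr)M(Y)\phi_0(Y)$ for $0\le j\le p-1$. Next I would take the part of $\E_p(X)\E_p(-X)=1$ supported on exponents divisible by $p$: from $\E_p(-X)=\sum_k(-1)^kX^k\phi_k(-X^p)$ the only contributions come from $j+k=0$ and $j+k=p$, giving in $\F_p[[Y]]$
\begin{equation*}
\phi_0(Y)\phi_0(-Y)-Y\sum_{j=1}^{p-1}(-1)^j\phi_j(Y)\phi_{p-j}(-Y)=1.
\end{equation*}
Using $\ell(-Y)=-\ell(Y)$ and $M(-Y)=M(Y)$ (both by parity, $p$ being odd), each $\phi_j(Y)\phi_{p-j}(-Y)$ equals $\bigl(\prod_{r=j+1}^{p-1}(r-\ell)\bigr)\bigl(\prod_{r=p-j+1}^{p-1}(r+\ell)\bigr)M(Y)^2\phi_0(Y)\phi_0(-Y)$ with $\ell=\ell(Y)$, so with $Z=\phi_0(Y)\phi_0(-Y)$ the identity becomes $Z\bigl(1-YM(Y)^2B(\ell(Y))\bigr)=1$, where $B(T)=\sum_{j=1}^{p-1}(-1)^j\prod_{r=j+1}^{p-1}(r-T)\prod_{r=p-j+1}^{p-1}(r+T)\in\F_p[T]$.

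It then remains to prove the polynomial identity $B(T)=-T^{p-2}$. Reindexing both products by $r\mapsto p-r$ rewrites $B(T)=\sum_{j=1}^{p-1}\bigl(\prod_{s=1}^{p-1-j}(T+s)\bigr)\bigl(\prod_{s=1}^{j-1}(T-s)\bigr)$, of degree $\le p-2$. For $t\in\F_p^{\ast}$ the factor $\prod_{s=1}^{p-1-j}(t+s)$ vanishes when $j<t$ and $\prod_{s=1}^{j-1}(t-s)$ vanishes when $j>t$, so only $j=t$ survives and $B(t)=(t-1)!\cdot(p-1)!/t!=(p-1)!/t\equiv -t^{-1}=-t^{p-2}$ by Wilson's theorem; and $B(0)=\sum_{a=0}^{p-2}(-1)^a a!\,(p-2-a)!=0$ by pairing $a\leftrightarrow p-2-a$, which have opposite parity since $p-2$ is odd. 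A polynomial of degree $\le p-2$ over $\F_p$ is determined by its values on $\F_p$, so $B(T)=-T^{p-2}$. Substituting $B(\ell)=-\ell^{p-2}$ and using $M=\ell/Y$ together with $\ell^p=\ell-Y$ gives $YM^2B(\ell)=-\ell^p/Y=1-M$, hence $ZM=1$, which is exactly $\phi_0(Y)\phi_0(-Y)M(Y)=1$.

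The main obstacle is spotting and establishing the identity $B(T)=-T^{p-2}$; everything else is careful bookkeeping of which pairs $j,k$ land in the $p$-divisible part and of the signs forced by $p$ being odd. I would stress that $\E_p(X)\E_p(-X)=1$ is an indispensable extra input: the reduced differential equation by itself leaves the coefficients $a_{kp}$ undetermined (the relations it imposes on them turn out to be vacuous), so any proof must import some genuinely characteristic-zero information about $\AH$.
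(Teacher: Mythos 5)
The paper does not actually prove Proposition~\ref{prop:X^p}: it is imported verbatim from \cite{AviMat:G(x)} (``Proposition~2'' there), so there is no in-paper argument to compare yours against. Judged on its own, your proof is complete and correct, and it is a genuinely self-contained derivation from first principles. I checked the key steps: the two inputs $\E_p(X)\E_p(-X)=1$ and the reduced recursion $na_n=\sum_i a_{n-p^i}$ are both legitimate consequences of Equation~\eqref{eq:AH_def} and \eqref{eq:FR}; the slice relations $\phi_{r-1}=(r-\ell)\phi_r$ for $1\le r\le p-1$ and $Y\phi_{p-1}=-\ell\phi_0$ follow correctly from splitting the recursion by residue of $n$ modulo $p$; the extraction of the $p$-divisible part of $\E_p(X)\E_p(-X)=1$ correctly isolates the pairs $j+k\in\{0,p\}$ with the sign $(-1)^{p-j}=-(-1)^j$; and the polynomial identity $B(T)=-T^{p-2}$ holds --- the reindexed form has degree at most $p-2$, the evaluation $B(t)=(p-1)!/t\equiv -t^{p-2}$ for $t\in\F_p^\ast$ is right, and $B(0)=0$ by the parity pairing since $p-2$ is odd. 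The final substitution using $M=\ell/Y$ and $\ell^p=\ell-Y$ (which encodes $\sum_{i\ge 0}Y^{p^i}$ satisfying its defining functional equation) cleanly yields $\phi_0(Y)\phi_0(-Y)M(Y)=1$, which is equivalent to \eqref{eq:+-} after cancelling $Y$ in the integral domain $\F_p[[Y]]$. Your closing remark is also accurate: the recursion alone expresses every $\phi_j$ in terms of $\phi_0$ but imposes only the vacuous constraint $-(\ell^{p-1}-1)M\phi_0=\phi_0$ on $\phi_0$ itself, so the multiplicative input $\E_p(X)\E_p(-X)=1$ is indeed indispensable. The one thing worth flagging is that the heavy lifting in your route is the combinatorial identity $B(T)=-T^{p-2}$, which plays the role that supercongruence or product-formula arguments might play in \cite{AviMat:G(x)}; since the present paper gives no proof, I cannot say whether the original argument coincides with yours, but nothing in your chain of reasoning depends on unstated facts.
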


In fact, Proposition~\ref{prop:X^p} determines the value of
$\sum_{r=0}^{k}(-1)^{r}a_{rp}a_{(k-r)p}$
for any $k$, but that sum involves $a_{kp}$ only when $k$ is even.
In particular, for $k<p^2$ that sum vanishes unless $k$ is a multiple of $p-1$,
and equals alternately $1$ and $-1$ otherwise,
as detailed in Proposition~\ref{cor:t_even}.
This paper was born out of an attempt to produce analogous results to cover the case where $k$ is odd.

We present a conjectural equation for the value of
$\sum_{r=0}^{k}(-1)^{r}ra_{rp}a_{(k-r)p}$ in the initial range $k<p$,
which involves {\em divided Bernoulli numbers} $B_n/n$.
We state that as Conjecture~\ref{conj:k_odd}.
It may conveniently be written as
a polynomial congruence involving the power series
$G(X)=\sum_{n=0}^{\infty} (-1)^n a_{np} X^{n}$
and the polynomial in $\F_p[X]$ defined as
\[
\gamma(X)
=\sum_{n=1}^{p-2}(B_{n}/n)X^{p-n}.
\]
A better understanding of this polynomial appears crucial for proving Conjecture~\ref{conj:k_odd}.
The related power series
$\gamma_{0}(X)=\sum_{n=1}^{\infty}(B_{n}/n)X^n\in\Q[[X]]$
was studied by Zagier in~\cite{zagier:bernoulli},
along with other series involving Bernoulli numbers.
Although $\gamma_0(X)$ might not be expressible in closed form, Zagier found a functional equation for it,
which we quote here as Equation~\eqref{eq:gamma_0}.
In Proposition~\ref{prop:feq_gamma} we produce an analogous functional equation
for the polynomial $\gamma(X)$, namely,
\[
\gamma(X-1)-\gamma(X)=\pounds_1(X)+X^{p-1}-w_p-1,
\]
where $\pounds_1(X)$ is the truncated logarithm.

Rather than attempting to adapt Zagier's argument to a proof of
Proposition~\ref{prop:feq_gamma}, we relate that to a congruence of
Nielsen~\cite[Equation~(7) at page~519]{Nielsen}, which is essentially the content of
our Proposition~\ref{prop:Nielsen}.
Because Nielsen's original statement is slightly incorrect,
and in fact was amended in~\cite[Proposition 3.1]{Fouche'}
with a different proof based on $p$-adic integration,
we provide a fresh proof of Nielsen's result
which is close to his original argument.

Our functional equation for $\gamma(X)$,
resulting from this new look at Nielsen's congruence,
allows us to deduce some numerical congruences (modulo $p$) for certain finite sums involving
divided Bernoulli numbers.
We present those in Corollaries~\ref{cor:sixth-roots} and~\ref{cor:eighth-roots}.

\section{On the coefficients of the Artin-Hasse series modulo a prime}

As observed in the Introduction, Equation~\eqref{eq:FR} does not help in calculating
coefficients $a_{rp}$ from previous coefficients.
Our next result allows us to compute some initial coefficients $a_{kp}$ from previous $a_{rp}$
as long as $k$ is even.

\begin{prop}\label{cor:t_even}
Let $p$ be an odd prime and $\E_{p}(X)=\sum_{i=0}^{\infty} a_{i}X^{i}$ in $\F_{p}[[X]]$ the Artin-Hasse exponential series.
Then for $0\leq k <p^2-1$ we have
\begin{equation}\label{eq:cor}
\sum_{r=0}^{k}(-1)^{r}a_{rp}a_{(k-r)p}=\begin{cases} (-1)^j & \textrm{if $k=j(p-1)$,}\\
\\
0 &\textrm{otherwise.}
\end{cases}
\end{equation}
\end{prop}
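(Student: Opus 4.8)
The plan is to extract Equation~\eqref{eq:cor} directly from Proposition~\ref{prop:X^p} by comparing coefficients. Writing $F(X)=\sum_{s\geq 0}a_{sp}X^{sp}=\E_p(X^p)\bmod(\text{non-}p\text{-multiples})$—more precisely $F(X)=\sum_s a_{sp}X^{sp}$—and $\tilde F(X)=\sum_r(-1)^r a_{rp}X^{rp}$, Proposition~\ref{prop:X^p} reads $F(X)\,\tilde F(X)\cdot\bigl(\sum_{i\geq 1}X^{p^i}\bigr)=X^p$. The left side is a power series in $X^p$; substituting $Y=X^p$ it becomes $f(Y)\tilde f(Y)\bigl(\sum_{i\geq 1}Y^{p^{i-1}}\bigr)=Y$ where $f(Y)=\sum_s a_{sp}Y^s$, $\tilde f(Y)=\sum_r(-1)^r a_{rp}Y^r$. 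So first I would divide through by $Y$ to get $f(Y)\tilde f(Y)\cdot h(Y)=1$ where $h(Y)=\sum_{i\geq 1}Y^{p^{i-1}-1}=1+Y^{p-1}+Y^{p^2-1}+\cdots$. The coefficient of $Y^k$ in $f(Y)\tilde f(Y)$ is exactly $c_k:=\sum_{r=0}^k(-1)^r a_{rp}a_{(k-r)p}$, the quantity we want to evaluate.

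Next I would exploit the shape of $h(Y)$ in the range $k<p^2-1$: there $h(Y)\equiv 1+Y^{p-1}\pmod{Y^{p^2-1}}$, because the next exponent $p^2-1$ already exceeds our range. Hence, reading the relation $\bigl(\sum_k c_k Y^k\bigr)\bigl(1+Y^{p-1}+\cdots\bigr)=1$ modulo $Y^{p^2-1}$ gives, for $1\leq k<p^2-1$,
\[
c_k + c_{k-(p-1)} = 0,
\]
with the convention $c_m=0$ for $m<0$, together with $c_0=1$ from the constant term (note $c_0=a_0^2=1$). This is a clean linear recursion: $c_k=-c_{k-(p-1)}$. Iterating from $c_0=1$, one gets $c_{j(p-1)}=(-1)^j$ for $j(p-1)<p^2-1$, i.e.\ $j\leq p$, and $c_k=0$ whenever $k$ is not a multiple of $p-1$ (since then $c_k$ traces back through the recursion to some $c_m$ with $-(p-1)<m<0$, which vanishes, or more directly: the residue of $k$ modulo $p-1$ is preserved and the base case $c_0$ is the only nonzero seed). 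Assembling these two cases is precisely the statement~\eqref{eq:cor}.

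The only genuinely delicate point is bookkeeping the range: I must check that the truncation $h(Y)\equiv 1+Y^{p-1}\pmod{Y^{p^2-1}}$ is exactly what is needed to justify the recursion for all $k<p^2-1$, and that when $k=j(p-1)$ with $j=p$ we have $k=p^2-p=p(p-1)<p^2-1$, so the value $(-1)^p=-1$ is indeed covered—while $k=(p+1)(p-1)=p^2-1$ is the first index falling outside the asserted range, consistent with the bound. A secondary check is that the substitution $Y=X^p$ is harmless: since the original identity in Proposition~\ref{prop:X^p} lives in $\F_p[[X]]$ and every term is a power of $X^p$ after dividing by the lone $\sum X^{p^i}$ factor—wait, one must verify $X^p$ divides the product, which it does because $\sum_{i\geq 1}X^{p^i}$ has $X^p$ as lowest term and $F(0)\tilde F(0)=a_0^2=1\neq 0$; dividing by $X^p$ then gives a relation each of whose terms is a power of $X^p$, legitimizing the substitution. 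None of this is hard, but it is where an error would most easily slip in, so I would state the truncation lemma for $h$ explicitly before running the recursion.
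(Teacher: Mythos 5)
Your proposal is correct and follows essentially the same route as the paper: both reduce Proposition~\ref{prop:X^p} to the relation $G(Y)G(-Y)\bigl(1+Y^{p-1}+Y^{p^2-1}+\cdots\bigr)=1$ after dividing by the lowest-order term, truncate the last factor to $1+Y^{p-1}$ in the range $k<p^2-1$, and conclude; the paper phrases the final step as expanding $1/(1+Y^{p-1})$ into a geometric series, which is exactly your recursion $c_k=-c_{k-(p-1)}$ with seed $c_0=1$. Your explicit bookkeeping of the substitution $Y=X^p$ and of the truncation modulus $Y^{p^2-1}$ is sound (and, if anything, slightly more careful than the paper's stated modulus).
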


\begin{proof}
According to Proposition~\ref{prop:X^p} we have
\[
G(X)G(-X)\sum_{i=0}^{\infty}X^{p^{i}-1}=1
\]
in $\F_p[[X]]$,
where $G(X)=\sum_{r=0}^{\infty}(-1)^ra_{rp}X^{r}$.
Because
$\sum_{i=0}^{\infty}X^{p^i-1}\equiv 1+X^{p-1}\pmod{X^{p^2}}$
we deduce
$
G(X)G(-X)
\equiv
1/(1+X^{p-1})
\pmod{X^{p^2}}
$
in $\F_p[[X]]$,
and the conclusion follows after expanding into a geometric series.
\end{proof}

\begin{rem}\label{rem:r_even}
An application of Proposition~\ref{cor:t_even} for $k=2$ yields $a_{2p}=a_{p}^2/2$ when $p>3$,
and $a_{6}=1-a_{3}^2=0$ when $p=3$.
Of course one may reach the same conclusion by expanding $u_{2p}$, the coefficient of $X^{2p}$ in the Artin-Hasse exponential series regarded as series in $\Z_{p}[[X]]$, and then viewing it modulo $p$.
In fact,
\[
u_{2p}=\frac{1+\binom{2p-1}{p-1}(2(p-1)!+(p-1)!^2)}{(2p)!},
\]
and because $(p-1)!=-1+pw_{p}$
the numerator of this expression for $u_{2p}$ equals $1+\binom{2p-1}{p-1}(-1+w_{p}^{2}p^2)$.
When $p>3$ Wolstenholme's Theorem, namely, $\binom{2p-1}{p-1}\equiv 1$ modulo $p^3$, implies $u_{2p}\equiv w_{p}^2/2\equiv u_{p}^2/2\pmod{p}$, as desired.
This argument can be easily reversed to produce
an alternate proof of Wolstenholme's Theorem starting from the identity
$a_{2p}=a_{p}^2/2$
obtained from Equation~\eqref{eq:cor} as above.
\end{rem}

In order to extend Proposition~\ref{cor:t_even} beyond the stated range one may consider, more generally, the sums
\[
s_{k}=\sum_{r=0}^{k}(-1)^r a_{rp}a_{(k-r)p},
\]
for $k\ge 0$, which we conveniently interpret as zero for $k<0$.
Then Equation~\eqref{eq:+-} implies
\[
X^{p}=\sum_{k=0}^{\infty}s_{k}X^{kp}\cdot \sum_{i=1}^{\infty}X^{p^i}=
\sum_{k=0}^{\infty}\left(\sum_{i=0}^{\infty}s_{k+1-p^i}\right)X^{(k+1)p}.
\]
Consequently, we have
\[
\sum_{i=0}^{\infty}s_{k+1-p^i}=\begin{cases}1&\textrm{if $k=0$} \\ 0 &\textrm{otherwise.}\end{cases}
\]
In particular, we deduce that $s_{k}=0$ unless $k$ is a multiple of $p-1$.

\section{A conjecture}\label{sec:conjecture}

Proposition~\ref{cor:t_even} allows one to express the early $a_{kp}$
in terms of preceding coefficients of the form $a_{jp}$ only when $k$ is even.
We already found $a_{2p}=a_p^2/2$ in Remark~\ref{rem:r_even}.
For $a_{3p}$ we need to resort to direct calculation starting from the explicit expression for $u_{3p}$ given by Equation~\eqref{eq:u_n}, which reads
\[
u_{3p}=\frac{1+3\binom{3p-1}{p-1}(p-1)!+3\binom{3p-1}{p-1}\binom{2p-1}{p-1}(p-1)!^2+
\binom{3p-1}{p-1}\binom{2p-1}{p-1}(p-1)!^3}{(3p)!}.
\]
Using known supercongruences for the binomial coefficients involved, shows
\[
a_{3p}=\frac{a_{p}^3}{3!}-\frac{B_{p-3}}{9}
\]
for $p>3$, where $B_{p-3}$ denotes a Bernoulli number.
Now, Proposition~\ref{cor:t_even} yields
\[
a_{4p}=\frac{a_{p}^4}{4!}-\frac{B_{p-3}}{9}a_{p}
\]
for $p>5$.
By similarly applying known supercongruences for binomial coefficients
to the expression for $u_{5p}$ one finds
\[
a_{5p}=\frac{a_{p}^5}{5!}-\frac{B_{p-3}}{9}a_{2p}-\frac{B_{p-5}}{25},
\]
and then Proposition~\ref{cor:t_even} yields
\[
a_{6p}=\frac{a_{p}^6}{6!}-\frac{B_{p-3}}{9}a_{3p}-\frac{B_{p-5}}{25}a_{p}-\frac{B_{p-3}^2}{2\cdot 9^2}
\]
for $p>7$.

To proceed further one would need higher supercongruences for binomial coefficients,
which appear currently unavailable.
In general, obtaining $a_{kp}$ by direct calculation from $u_{kp}$ (for $k$ odd) requires congruences modulo $p^{k+1}$ for the binomial coefficients involved in the numerator of $u_{kp}$.

Alternately, one would need a replacement for Proposition~\ref{cor:t_even} that gives a nontrivial conclusion
for $k$ odd.
The following conjectural recursion formula would do.

\begin{conj}\label{conj:k_odd}
Let $p$ be an odd prime and $\E_{p}(X)=\sum_{i=0}^{\infty} a_{i}X^{i}$ in $\F_{p}[[X]]$ the Artin-Hasse exponential series.
For every integer $1<k<p$ we have
\begin{equation}\label{eq:k_odd}
\sum_{r=0}^{k}(-1)^{r}r a_{rp}a_{(k-r)p}=\frac{B_{p-k}}{k},
\end{equation}
where $B_{n}$ denotes the $n$-th Bernoulli number.
\end{conj}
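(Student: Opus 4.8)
The plan is to recast Conjecture~\ref{conj:k_odd} as a single congruence modulo $X^{p}$ for the power series $G(X)=\sum_{n\ge0}(-1)^{n}a_{np}X^{n}$, and then to extract that congruence from the explicit expression for the coefficients $u_{kp}$. For the first step, note that the left-hand side of~\eqref{eq:k_odd} is the coefficient of $X^{k}$ in $XG'(X)\,G(-X)$, because $XG'(X)=\sum_{r}(-1)^{r}ra_{rp}X^{r}$ and $G(-X)=\sum_{s}a_{sp}X^{s}$. By the proof of Proposition~\ref{cor:t_even} we have $G(X)G(-X)\bigl(1+X^{p-1}+X^{p^{2}-1}+\cdots\bigr)=1$ in $\F_p[[X]]$, so, since $1/(1+X^{p-1}+\cdots)\equiv1\pmod{X^{p-1}}$ and $XG'(X)/G(X)$ has zero constant term, $XG'(X)G(-X)\equiv XG'(X)/G(X)\pmod{X^{p}}$. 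Using $p-n\equiv-n\pmod p$ to rewrite $\gamma(X)\equiv-\sum_{k=2}^{p-1}(B_{p-k}/k)X^{k}$, and recalling that $a_p\equiv-w_p$, one checks that Conjecture~\ref{conj:k_odd} is equivalent to the polynomial congruence
\[
XG'(X)\equiv\bigl(w_pX-\gamma(X)\bigr)\,G(X)\pmod{X^{p}};
\]
equivalently, $G(X)\equiv\exp\bigl(\Gamma(X)\bigr)\pmod{X^{p}}$, where $\exp$ is taken modulo $X^{p}$ (where it is well defined) and $\Gamma(X)$ is the polynomial with $X\Gamma'(X)=w_pX-\gamma(X)$ and $\Gamma(0)=0$. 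The even-degree part of this congruence holds unconditionally by Proposition~\ref{cor:t_even}, which gives $G(X)G(-X)\equiv1-X^{p-1}\pmod{X^{p}}$; its content lies entirely in the odd-degree coefficients of $XG'(X)/G(X)$, which Proposition~\ref{cor:t_even} does not constrain.

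To reach those odd coefficients I would compute $a_{kp}\bmod p$ for $1<k<p$ directly. In this range Equation~\eqref{eq:u_n} is tractable: the only decompositions $kp=\kappa_{0}+\kappa_{1}p+\kappa_{2}p^{2}+\cdots$ with $k<p$ have $\kappa_{0}=ap$, $\kappa_{1}=k-a$ and $\kappa_{i}=0$ for $i\ge2$, whence
\[
u_{kp}=\sum_{a=0}^{k}\frac{1}{(ap)!\,(k-a)!\,p^{k-a}},\qquad 1\le k<p .
\]
One then expands $1/(ap)!$ $p$-adically --- starting from $(ap)!/p^{a}\equiv(-1)^{a}a!\,(1-apw_p+\cdots)\pmod{p^{2}}$ and refining it through Wolstenholme-type congruences for the power sums $\sum_{s=1}^{p-1}s^{-j}$, which is exactly where divided Bernoulli numbers $B_{p-k}/k$, and hence the polynomial $\gamma(X)$, enter --- substitutes into the sum, and collects the residue modulo $p$; the result should match the coefficients of $G(X)$ prescribed by $G(X)\equiv\exp\bigl(\Gamma(X)\bigr)\pmod{X^{p}}$. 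A softer route, closer to the spirit of this paper, would be to seek a ``translation'' functional equation relating $G(X-1)$ to $G(X)$ which, combined with the functional equation of Proposition~\ref{prop:feq_gamma} for $\gamma(X)$, would force $XG'(X)/G(X)$ modulo $X^{p}$.

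The main obstacle is the amount of $p$-adic precision this requires. In the sum for $u_{kp}$ the term indexed by $a$ has $p$-adic valuation $-a-(k-a)=-k$, so $u_{kp}$, though a $p$-integer, is realized as a sum of $k+1$ terms each of valuation $-k$; recovering its residue modulo $p$ therefore demands knowing each $(ap)!$ --- and the associated power sums --- to relative precision $p^{k+1}$, that is, supercongruences modulo ever larger powers of $p$. This is precisely the obstruction already recorded in Section~\ref{sec:conjecture}: obtaining $a_{kp}$ from $u_{kp}$ needs the binomial coefficients in the numerator of $u_{kp}$ modulo $p^{k+1}$, which is available only for small $k$. Those cases do confirm Conjecture~\ref{conj:k_odd} --- the formulas for $a_{3p},\dots,a_{6p}$ verify it for $k\le6$, for $p$ large enough --- but a uniform argument for all $1<k<p$ appears to require genuinely new input. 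Put differently, Proposition~\ref{cor:t_even} pins down only the product $G(X)G(-X)$ (for $k=2$, Wolstenholme's congruence --- see Remark~\ref{rem:r_even}), whereas Conjecture~\ref{conj:k_odd} calls for $G(X)$ itself modulo $X^{p}$, and no ``square root'' of Proposition~\ref{cor:t_even} isolating the odd-degree part of $XG'(X)/G(X)$ is presently known.
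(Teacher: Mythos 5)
This statement is presented in the paper as a conjecture and is not proved there: the paper only establishes the even-$k$ cases (and $k=p-1$) as consequences of Proposition~\ref{cor:t_even}, records the equivalent polynomial reformulations \eqref{eq:conj_gamma_A}--\eqref{eq:conj_G_D}, and explains that the remaining odd cases would require supercongruences modulo $p^{k+1}$ that are currently unavailable. Your write-up reproduces essentially these same reductions (the identification of the left-hand side as a coefficient of $XG'(X)G(-X)$, the passage to $XG'(X)/G(X)$ via Proposition~\ref{cor:t_even}, the explicit formula for $u_{kp}$ in the range $k<p$, and the precision obstruction), and correctly stops short of claiming a proof, so it is consistent with the paper's treatment; there is no gap relative to the paper, but, as you acknowledge, no proof of the conjecture either.
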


Here we adopt the prevalent convention where $B_{1}=-1/2$.
For even $k$, Conjecture~\ref{conj:k_odd}
is a consequence of our results.
In fact, in that case, replacing $r$ with $k-r$ in the left-hand of Equation~\eqref{eq:k_odd} and comparing with the original
expression shows
\[
2\sum_{r=0}^{k}(-1)^{r}r a_{rp}a_{(k-r)p}=k \sum_{r=0}^{k}(-1)^{r}a_{rp}a_{(k-r)p}.
\]
However, the right-hand side of this equation vanishes for $0<k<p-1$
according to Proposition~\ref{cor:t_even}.
So does the right-hand side of Equation~\eqref{eq:k_odd}
because of the vanishing of odd-indexed Bernoulli numbers,
with the exception of $B_1$.
The same argument for $k=p-1$ together with Proposition~\ref{cor:t_even}
yields
\[
\sum_{r=0}^{p-1}(-1)^{r}r a_{rp}a_{(p-1-r)p}=-B_{1}.
\]

Thus, Conjecture~\ref{conj:k_odd} remains unproved only for odd $k$.
The value $k=1$ is excluded because the right-hand side is not
$p$-integral.
However, when $k=1$ the left-hand side reads $-a_p$, which is the value modulo $p$ of
$-u_p\equiv w_p\equiv B_{p-1}-(p-1)/p\pmod{p}$,
because of the well-known congruence
\begin{equation}\label{eq:Lehmer}
(1+p B_{p-1})/p\equiv w_p+1\pmod{p},
\end{equation}
see~\cite[Equation~(24)]{lehmer:bernoulli}.
If  Conjecture~\ref{conj:k_odd} holds, then, for example, one would deduce
\[
a_{7p}=\frac{a_{p}^7}{7!}-\frac{B_{p-3}}{9}a_{4p}-\frac{B_{p-5}}{5^2}a_{2p}-\frac{B_{p-3}^2}{2\cdot 9^2}a_{p}-\frac{B_{p-7}}{7^2}
\]
for $p>7$.

Conjecture~\ref{conj:k_odd} can be equivalently formulated as the polynomial congruence
\begin{equation}\label{eq:conj_gamma_A}
XG'(X)G(-X)\equiv w_{p}X-\gamma(X) {\pmod {X^{p}}}
\end{equation}
where $G'(X)$ denotes the derivative of the power series
$G(X)=\sum_{n=0}^{\infty} (-1)^n a_{np} X^{n}$,
and $\gamma(X)$ is the polynomial in $\F_p[X]$ defined as
\[
\gamma(X)
=-\sum_{k=2}^{p-1}(B_{p-k}/k)X^k
=\sum_{n=1}^{p-2}(B_{n}/n)X^{p-n}.
\]

Because $XG(X)G(-X)\equiv X\pmod{X^p}$,
an equivalent form of Equation~\eqref{eq:conj_gamma_A} is
\begin{equation}\label{eq:conj_gamma_B}
X \frac{G'(X)}{G(X)}\equiv w_{p}X-\gamma(X) {\pmod {X^p}}.
\end{equation}
The logarithmic derivative appearing
at the left-hand side suggests looking at
the derivative of $\pounds_1(G(X))$.
Here $\pounds_r(X)=\sum_{k=1}^{p-1}X^k/k^r\in\Q[X]$, for any integer $r$,
and implicitly depending on a given prime $p$, is standard notation
for the {\em finite polylogarithms}.
The {\em truncated logarithm}
$\pounds_1(X)$ is a partial sum of the series $-\log(1-X)$.
Note that we have
$X(d/dX)\pounds_r(X)=\pounds_{r-1}(X)$, and
$\pounds_0(X)=\sum_{k=1}^{p-1}X^k=X(X^{p-1}-1)/(X-1)$.

\begin{lemma}\label{le:pound0_g}
For any $g(X)\in 1+\F_p[[X]]$ we have
\begin{equation*}
\pounds_{0}(g(X))\equiv \begin{cases}
X^{p-1}-1 {\pmod {X^p}} & \textrm{if $g'(0)\neq 0$},\\
-1 {\pmod {X^{2p-2}}}& \textrm{otherwise}.
\end{cases}
\end{equation*}
\end{lemma}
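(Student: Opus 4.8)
The plan is to reduce everything to the explicit closed form $\pounds_0(Y)=Y(Y^{p-1}-1)/(Y-1)$ noted just above the statement, and then substitute $Y=g(X)$. Writing $g(X)=1+cX+\cdots$ with $c=g'(0)$, the key observation is that in $\F_p[[X]]$ we have $g(X)^{p-1}-1=g(X)^p/g(X)-1$, and since raising to the $p$-th power is the Frobenius on $\F_p[[X]]$, $g(X)^p=g(X^p)$. Hence
\[
g(X)^{p-1}-1=\frac{g(X^p)-g(X)}{g(X)}.
\]
Now $g(X^p)-g(X)\equiv -(g(X)-1)\pmod{X^p}$ because $g(X^p)\equiv 1\pmod{X^p}$, which already gives $g(X)^{p-1}-1\equiv -(g(X)-1)/g(X)\pmod{X^p}$; multiplying by $g(X)/(g(X)-1)$ — legitimate since $g(X)-1$ is a unit times $X$ (or a higher power of $X$) — we get $\pounds_0(g(X))=g(X)\bigl(g(X)^{p-1}-1\bigr)/(g(X)-1)$. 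The first step, then, is to turn this into a congruence by controlling $g(X^p)-g(X)$ more precisely.

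First I would treat the generic case $c\neq 0$. Then $g(X)-1=cX(1+\cdots)$ is $X$ times a unit, so from $g(X)^{p-1}-1=(g(X^p)-g(X))/g(X)$ we obtain
\[
\pounds_0(g(X))=\frac{g(X^p)-g(X)}{g(X)-1}.
\]
Writing $g(X^p)-g(X)=(g(X^p)-1)-(g(X)-1)$, the numerator is $(g(X^p)-1)-(g(X)-1)$; dividing by $g(X)-1$ gives $\pounds_0(g(X))=\dfrac{g(X^p)-1}{g(X)-1}-1$. Modulo $X^p$ the term $g(X^p)-1\equiv 0$... no: one must be careful, since $1/(g(X)-1)$ has a pole. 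Instead I would argue: $g(X^p)-1$ is a power series in $X^p$ with zero constant term, hence divisible by $X^p$; write $g(X^p)-1=X^p h(X^p)$. Then $\dfrac{g(X^p)-1}{g(X)-1}=\dfrac{X^p h(X^p)}{cX(1+\cdots)}=X^{p-1}\cdot(\text{unit})$, and reducing that unit modulo $X$ picks out the leading coefficient: $h(0)/c=g'(0)/g'(0)$... here $h(0)$ is the coefficient of $X^p$ in $g(X^p)$, i.e. the coefficient of $X$ in $g(X)$, namely $c$; so the leading coefficient is $c/c=1$. Therefore $\dfrac{g(X^p)-1}{g(X)-1}\equiv X^{p-1}\pmod{X^p}$, and $\pounds_0(g(X))\equiv X^{p-1}-1\pmod{X^p}$, as claimed.

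For the case $c=0$, write $g(X)-1=c_m X^m+\cdots$ with $m\geq 2$ (if $g=1$ identically the statement $\pounds_0(1)=-1$ is immediate). Now $g(X)^{p-1}-1=(g(X^p)-g(X))/g(X)$, and $g(X^p)-g(X)=(g(X^p)-1)-(g(X)-1)$; the first summand is divisible by $X^p$ and the second by $X^m$, so $g(X)^{p-1}-1\equiv -(g(X)-1)/g(X)\pmod{X^p}$, giving $\pounds_0(g(X))=g(X)(g(X)^{p-1}-1)/(g(X)-1)\equiv -1\pmod{X^{p}}$. To upgrade the modulus to $X^{2p-2}$ I would keep the term $g(X^p)-1$: since $m\geq 2$, we have $m\cdot(p-1)\ge 2p-2$... more carefully, $g(X)-1$ vanishes to order $m\geq2$, so $(g(X^p)-1)/(g(X)-1)$ has a zero of order at least $p-m\ge p - (\text{order of }g-1)$; combined with the other contributions one checks the error term $g(X)(g(X^p)-1)/(g(X)-1)$ is divisible by $X^{2p-2}$.

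The main obstacle will be the bookkeeping in the non-generic case: making the order-of-vanishing estimate sharp enough to land exactly at $X^{2p-2}$ rather than something weaker, and handling uniformly the possibility that $g(X)-1$ vanishes to order larger than $2$ (including the degenerate $g\equiv1$). I expect this to be a short but slightly fiddly valuation argument; the generic case is essentially the Frobenius computation above and should be routine.
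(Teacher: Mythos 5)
Your proof is correct and takes a mildly different route from the paper's. You derive the exact identity $\pounds_0(g(X))+1=\bigl(g(X^p)-1\bigr)/\bigl(g(X)-1\bigr)$ from the closed form of $\pounds_0$ together with the Frobenius identity $g(X)^p=g(X^p)$ in $\F_p[[X]]$, and then count orders of vanishing. The paper instead factors $X^p-1=(X-1)(\pounds_0(X)+1)$ over $\F_p$, which upon substituting $g(X)$ gives $\pounds_0(g(X))+1=(g(X)-1)^{p-1}$ outright; since $(g-1)^{p-1}=(g^p-1)/(g-1)$, the two identities coincide, but the paper's form makes both cases one-line valuation statements (a $(p-1)$-st power of a series of order $m$ has order $m(p-1)$, with leading coefficient $g'(0)^{p-1}=1$ when $m=1$ by Fermat's little theorem), with no need to invoke the Frobenius or to match the leading coefficients of $g(X^p)-1$ and $g(X)-1$. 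One slip to correct in your write-up: in the case $g'(0)=0$ you assert that $(g(X^p)-1)/(g(X)-1)$ vanishes to order at least $p-m$, which is not the right quantity; the correct order is $mp-m=m(p-1)\ge 2p-2$, exactly as you state a line earlier, and with that observation the non-generic case is immediate rather than fiddly.
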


\begin{proof}
Because
$X^p-1=(X-1)(\pounds_0(X)+1)$ we have
\begin{equation*}
(g(X)-1)^{p-1}=\pounds_{0}(g(X))+1.
\end{equation*}
If $g'(0)\neq 0$, then $g(X)-1$ is a multiple of $X$ but not of $X^2$ (in $\F_p[[X]]$).
Since $(g'(0))^{p-1}\in\F_p^\ast$ the power series
$(g(X)-1)^{p-1}$
belongs to
$X^{p-1}+X^p\F_p[[X]]$,
and hence
$\pounds_{0}(g(X))\equiv X^{p-1}-1 {\pmod {X^p}}$
as desired.
If, however, $g'(0)=0$, then $g(X)-1$ is a multiple of $X^2$, and hence
$\pounds_{0}(g(X))\equiv -1 {\pmod {X^{2p-2}}}$.
\end{proof}

In particular, taking $g(X)=G(X)=\sum_{n=0}^{\infty}(-1)^n a_{np}X^n$
in Lemma~\ref{le:pound0_g}
we deduce
$X\pounds_{0}(G(X))\equiv -X\pmod{X^p}$.
Hence
\[
X\frac{d}{dX}\pounds_1(G(X))
=X\frac{\pounds_{0}(G(X))}{G(X)}G'(X)
\equiv
-X\frac{G'(X)}{G(X)}\pmod{X^p},
\]
and so our conjectured Equation~\eqref{eq:conj_gamma_A} amounts to
\begin{equation}\label{eq:conj_G_C}
X\frac{d}{dX}\pounds_1(G(X))\equiv -w_{p}X+\gamma(X) {\pmod {X^p}}.
\end{equation}
This suggests integrating both sides after dividing them by $X$.
Set
\[
\rho(X)=\sum_{n=1}^{p-2}\frac{B_{n}}{n^2}X^{p-n},
\]
whence $X(d/dX) \rho(X)=-\gamma(X)$.
Noting that the constant term of $\pounds_1(G(X))$ is $\pounds_1(1)=0$,
yet another equivalent form of our conjectured Equation~\eqref{eq:conj_gamma_A} reads
\begin{equation}\label{eq:conj_G_D}
\pounds_1(G(X))\equiv -w_{p}X-\rho(X) {\pmod {X^{p}}}.
\end{equation}
Unfortunately, the polynomial $\rho(X)$ remains rather mysterious.
However, in the next section we will obtain some information on the polynomial $\gamma(X)$.

\section{A polynomial involving divided Bernoulli numbers}\label{sec:Bernoulli}

Several polynomial forms of our Conjecture~\ref{conj:k_odd}
given in the last section involve the polynomial
$
\gamma(X)
=\sum_{n=1}^{p-2}(B_{n}/n)X^{p-n}
\in\F_p[X]$.
Don Zagier noted in~\cite{zagier:bernoulli} that
the related power series
$\gamma_{0}(X)=\sum_{n=1}^{\infty}(B_{n}/n)X^n\in\Q[[X]]$
satisfies the functional equation
\begin{equation}\label{eq:gamma_0}
\gamma_{0}\left(\frac{X}{1-X}\right)-\gamma_{0}(X)=\log (1-X)+X.
\end{equation}
Because
$\bigl(X/(1-X)\bigr)^n=\sum_{k=1}^{\infty}\binom{k-1}{n-1}X^{k}$,
Equation~\eqref{eq:gamma_0} is equivalent to the identity
\begin{equation}\label{eq:sum_bernoulli}
\sum_{n=1}^{k-1}\binom{k-1}{n-1}\frac{B_{n}}{n}=-\frac{1}{k},
\quad\text{for $k>1$.}
\end{equation}
Note that $\bigl(B_{p-1}/(p-1)\bigr)X^{p-1}$ is the earliest term of the series $\gamma_{0}(X)$
whose coefficient is not in $\Z_p$, but this term cancels in the difference at the left-hand side of
Equation~\eqref{eq:gamma_0}.
Thus, after viewing Equation~\eqref{eq:gamma_0} modulo $X^p$ we may view the result modulo $p$,
and find
\begin{equation}\label{eq:feq_gamma0}
\gamma^\ast\left(\frac{X}{1-X} \right)-\gamma^\ast(X)\equiv X-\pounds_1(X) \pmod {X^p},
\end{equation}
in terms of the polynomial
$\gamma^\ast(X)=\sum_{n=1}^{p-2}(B_{n}/n)X^{n}=X^p\gamma(1/X)$ in $\F_{p}[X]$.

One can actually refine the congruence of Equation~\eqref{eq:feq_gamma0}
to an identity in $\F_p[[X]]$
after an appropriate modification, and even to a polynomial identity
when expressed in terms of $\gamma(X)$, namely, Equation~\eqref{eq:feq_gamma} below.
Although it is possible to prove Equation~\eqref{eq:feq_gamma} by adapting direct arguments
in~\cite{zagier:bernoulli} that rely on Equation~\eqref{eq:sum_bernoulli},
we take a different route.

A polynomial closely related to $\gamma(X)$, namely,
$\sum_{k=1}^{p-2}(B_{k}/k)(X^{p-1-k}-1)$,
was considered by Nielsen in~\cite{Nielsen},
and in more recent times by Fouch\'e starting with~\cite{Fouche'}.
We quote a crucial congruence for Nielsen's work,
in an equivalent formulation that is closer to our polynomial $\gamma(X)$.

\begin{prop}\label{prop:Nielsen}
For every integer $x$ we have
\[
\sum_{k=1}^{p-2}(B_{k}/k)x^{p-k}
\equiv x\,q_{p}(x)+w_{p}\,x
+\lfloor x/p\rfloor
\pmod{p}.
\]
\end{prop}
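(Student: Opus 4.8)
The plan is to evaluate the power sum $\sum_{j=1}^{a-1}j^{p-1}$ in two ways, first for an integer $a$ with $1\le a\le p-1$, and then to upgrade the resulting congruence to all integers $x$ by a periodicity argument. On the elementary side, Fermat's little theorem gives $j^{p-1}\equiv 1\pmod p$ when $p\nmid j$ and $j^{p-1}\equiv 0\pmod p$ otherwise, so that $\sum_{j=1}^{a-1}j^{p-1}\equiv (a-1)-\lfloor a/p\rfloor\pmod p$, the subtracted term counting the multiples of $p$ in $\{1,\dots,a-1\}$.

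On the arithmetic side I would use Faulhaber's formula $\sum_{j=0}^{a-1}j^{p-1}=\frac1p\sum_{k=0}^{p-1}\binom pk B_k a^{p-k}$ (with the convention $B_1=-1/2$, and the same left-hand side since $0^{p-1}=0$), splitting off the terms $k=0,1,p-1$ and treating the range $2\le k\le p-2$ on its own. For $2\le k\le p-2$ one has $\frac1p\binom pk=\frac1k\binom{p-1}{k-1}$, a $p$-integer, and since $\binom{p-1}{k-1}\equiv(-1)^{k-1}\pmod p$ while $B_k=0$ for odd $k\ge 3$, this partial sum is $\equiv-\sum_{k=2}^{p-2}(B_k/k)a^{p-k}=-\gamma(a)-a^{p-1}/2\pmod p$, by the definition of $\gamma$ and $B_1/1=-1/2$. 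The term $k=1$ contributes $-a^{p-1}/2$ exactly. The two genuinely non-$p$-integral terms, $k=0$ and $k=p-1$, must be combined: their sum $\frac{a^p}{p}+aB_{p-1}$ is a $p$-integer, and inserting $\frac{a^p}{p}=\frac{a^p-a}{p}+\frac ap=a\,q_p(a)+\frac ap$ together with Lehmer's congruence~\eqref{eq:Lehmer}, in the form $pB_{p-1}\equiv-1+p(w_p+1)\pmod{p^2}$, makes the fractions $\pm a/p$ cancel and gives $\frac{a^p}{p}+aB_{p-1}\equiv a\,q_p(a)+(w_p+1)a\pmod p$.

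Adding the four contributions and simplifying with $a^{p-1}\equiv 1\pmod p$, everything except $\gamma(a)$, $a\,q_p(a)$, $w_pa$, $\lfloor a/p\rfloor$ and a pair of cancelling copies of $a-1$ disappears, and comparison with the Fermat evaluation yields exactly $\gamma(a)\equiv a\,q_p(a)+w_pa+\lfloor a/p\rfloor\pmod p$. To extend this to an arbitrary $x\in\Z$, interpret $x\,q_p(x)$ as the integer $(x^p-x)/p$. The left-hand side depends only on $x\bmod p$, since $\gamma\in\F_p[X]$; and expanding $(x+p)^p$ shows that $(x^p-x)/p$ decreases by $1$ modulo $p$ under $x\mapsto x+p$, which is offset by the floor term increasing by $1$, while $w_px$ is unchanged, so the right-hand side also depends only on $x\bmod p$. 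As the congruence holds for $x\in\{1,\dots,p-1\}$ and both sides vanish at $x=0$, it holds for every integer $x$.

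The one delicate point is the $p$-adic bookkeeping for $k=0$ and $k=p-1$: each has a pole at $p$, one from the factor $1/p$ in Faulhaber's formula and one from the denominator of $B_{p-1}$ (von Staudt--Clausen), so neither term can be reduced modulo $p$ in isolation. It is precisely the cancellation of these two poles, measured by Lehmer's congruence, that produces the Wilson quotient in the statement; everything else is routine manipulation of binomial coefficients modulo $p$ and the vanishing of the odd-indexed Bernoulli numbers beyond $B_1$.
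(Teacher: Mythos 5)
Your proposal is correct and follows essentially the same route as the paper's proof: Faulhaber's formula for $\sum j^{p-1}$, the congruence $\frac1p\binom pk=\frac1k\binom{p-1}{k-1}\equiv(-1)^{k-1}/k\pmod p$ for the middle range, Lehmer's congruence~\eqref{eq:Lehmer} to cancel the two $p$-adic poles at $k=0$ and $k=p-1$, comparison with the Fermat evaluation of the power sum, and the periodicity of $x\,q_p(x)+\lfloor x/p\rfloor$ modulo $p$ to extend from $0\le x<p$ to all integers. The only differences are cosmetic (summing to $a-1$ rather than to $x$, and splitting off the $k=1$ term instead of absorbing the sign $(-1)^j$ into the formula).
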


Here
$q_{p}(x)=(x^{p-1}-1)/p$, a Fermat quotient.
The product $x\,q_{p}(x)$ is sometimes easier to handle than $q_p(x)$,
as it is an integer for every integer $x$.
Proposition~\ref{prop:Nielsen} is
essentially~\cite[Proposition 3.1]{Fouche'}, an amended version of Nielsen's congruence.
As Fouch\'e pointed out,
the congruence as stated by Nielsen,
namely, \cite[Equation~(7) at page~519]{Nielsen},
also quoted in~\cite[page~112]{Dickson1},
is only correct in the range $0<x<p$.
Fouch\'e provided a proof based on $p$-adic integration.
For the reader's convenience we present a short proof
of Proposition~\ref{prop:Nielsen}
that is close to Nielsen's original argument.

\begin{proof}
First note that the statement follows inductively from its special case
where $0\le x<p$, because
$(x+p)\,q_p(x+p)
\equiv
x\,q_p(x)-1
\pmod{p}
$.
According to Faulhaber's formula, for any non-negative integer $x$ we have
\begin{align*}
\sum_{k=0}^{x}k^{p-1}
&=
\frac{1}{p}\sum_{j=0}^{p-1}(-1)^j\binom{p}{j}B_j x^{p-j}
\\&\equiv
\frac{x^p+pB_{p-1}x}{p}
-\sum_{j=1}^{p-2}(B_j/j) x^{p-j}
\pmod{p}.
\\&\equiv
x\,q_p(x)+(w_p+1)x
-\sum_{j=1}^{p-2}(B_j/j) x^{p-j}
\pmod{p},
\end{align*}
where we have used
$\frac{1}{p}\binom{p}{j}=\frac{1}{j}\binom{p-1}{j-1}\equiv(-1)^{j-1}/j\pmod{p}$
for $0<j<p$, and then Equation~\eqref{eq:Lehmer}.
The desired conclusion follows because
$\sum_{k=0}^{x}k^{p-1}\equiv x\pmod{p}$
for $0\le x<p$.
\end{proof}

The left-hand side of Nielsen's congruence in
Proposition~\ref{prop:Nielsen}
resembles an evaluation of our polynomial $\gamma(X)$, but the expression
$x\,q_p(x)$ at the right-hand side
cannot be viewed as a polynomial with coefficients in $\F_p$,
as it depends on the value of $x$ modulo $p^2$.
Although the expression
$x\,q_p(x)+\lfloor x/p\rfloor$
can, handling Fermat quotients as polynomials in $\F_p[X]$ is more elegantly done by considering the symmetrized expression
\begin{equation}\label{eq:Granville}
x\,q_{p}(x)+(1-x)\,q_{p}(1-x)=\frac{x^p-1+(1-x)^p}{p}\equiv -\pounds_1(x)\pmod{p},
\end{equation}
see~\cite[Equation~12]{MatTau:polylog}, for example.
Now
$\lfloor x/p\rfloor+\lfloor (1-x)/p\rfloor$
equals $0$ if $x\equiv 0,1\pmod{p}$, and $-1$ otherwise,
and the polynomial
$1-X^{p-1}-(1-X)^{p-1}\in\F_p[X]$ takes the value $0$ on $0,1$,
and $-1$ on the remaining elements of $\F_p$.
It follows that $\gamma(X)$ satisfies
\begin{equation}\label{eq:feq_gamma_sym}
\gamma(X)+\gamma(1-X)=-\pounds_1(X)-X^{p-1}-(1-X)^{p-1}+w_p+1,
\end{equation}
because both sides have degree less than $p$, and agree when
evaluated on elements of $\F_p$ according to Proposition~\ref{prop:Nielsen}.
Using the fact that $\gamma(X)+X^{p-1}/2$ is an odd polynomial turns
Equation~\eqref{eq:feq_gamma_sym}
into the following finite analogue of Equation~\eqref{eq:gamma_0}
found by Zagier.

\begin{prop}\label{prop:feq_gamma}
The polynomial
$\gamma(X)
=\sum_{n=1}^{p-2}(B_{n}/n)X^{p-n}$
in $\F_p[X]$
satisfies the functional equation
\begin{equation}\label{eq:feq_gamma}
 \gamma(X-1)-\gamma(X)=\pounds_1(X)+X^{p-1}-w_p-1.
\end{equation}
\end{prop}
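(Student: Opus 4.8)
The plan is to obtain Equation~\eqref{eq:feq_gamma} from the symmetric functional equation~\eqref{eq:feq_gamma_sym}, already established from Nielsen's congruence in Proposition~\ref{prop:Nielsen}, by feeding in a parity property of $\gamma(X)$. First I would record that $\gamma(X)+X^{p-1}/2$ is an odd polynomial in $\F_p[X]$. Indeed, the term of $\gamma(X)=\sum_{n=1}^{p-2}(B_n/n)X^{p-n}$ indexed by $n=1$ equals $(B_1/1)X^{p-1}=-X^{p-1}/2$, so $\gamma(X)+X^{p-1}/2=\sum_{n=2}^{p-2}(B_n/n)X^{p-n}$; since $B_n=0$ for every odd $n>1$, only even $n$ contribute, and for such $n$ the exponent $p-n$ is odd because $p$ is odd. (Here $2$ is invertible in $\F_p$, so $X^{p-1}/2$ makes sense.) Using that $p-1$ is even, so $(-X)^{p-1}=X^{p-1}$, this parity statement is equivalent to the identity $\gamma(-X)=-\gamma(X)-X^{p-1}$ in $\F_p[X]$.

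Next I would rewrite $\gamma(1-X)=\gamma\bigl(-(X-1)\bigr)$ and apply the last identity with $X$ replaced by $X-1$, obtaining $\gamma(1-X)=-\gamma(X-1)-(X-1)^{p-1}$. Substituting this into Equation~\eqref{eq:feq_gamma_sym} yields
\[
\gamma(X)-\gamma(X-1)-(X-1)^{p-1}=-\pounds_1(X)-X^{p-1}-(1-X)^{p-1}+w_p+1 .
\]
Since $p-1$ is even we have $(1-X)^{p-1}=(X-1)^{p-1}$, so the term $(X-1)^{p-1}$ cancels from both sides, leaving $\gamma(X)-\gamma(X-1)=-\pounds_1(X)-X^{p-1}+w_p+1$, which is exactly Equation~\eqref{eq:feq_gamma} after transposition.

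I do not expect a genuine obstacle at this stage: the substance of the argument lies in Proposition~\ref{prop:Nielsen} and in the passage from it to the polynomial identity~\eqref{eq:feq_gamma_sym}, after which the present step is purely algebraic manipulation of identities in $\F_p[X]$. The only point requiring care is the parity computation — correctly isolating the $n=1$ term of $\gamma$, and invoking that $p$ is odd, so that $p-1$ is even (whence $(-X)^{p-1}=X^{p-1}$ and $(1-X)^{p-1}=(X-1)^{p-1}$) and the odd-indexed Bernoulli numbers beyond $B_1$ all vanish, so that $\gamma(X)+X^{p-1}/2$ is a sum of odd-degree monomials.
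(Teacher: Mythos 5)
Your proof is correct and follows the paper's own route exactly: the paper likewise derives Equation~\eqref{eq:feq_gamma} from the symmetric identity~\eqref{eq:feq_gamma_sym} by ``using the fact that $\gamma(X)+X^{p-1}/2$ is an odd polynomial,'' and you have simply written out the details of that one-line argument (the isolation of the $B_1$ term, the vanishing of odd-indexed Bernoulli numbers beyond $B_1$, and the substitution $\gamma(1-X)=-\gamma(X-1)-(X-1)^{p-1}$), all of which check out.
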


Equation~\eqref{eq:feq_gamma} actually characterizes $\gamma(X)$ uniquely among the
polynomials in $\F_p[X]$ of degree less than $p$ and without constant term.
That is because one easily sees that a polynomial
$f(X)\in\F_p[X]$ satisfies $f(X-1)-f(X)=0$
if and only if it has the form
$f(X)=g(X^p-X)$ for some $g(X)\in\F_p[X]$.

It is also easy to deduce Proposition~\ref{prop:Nielsen} from
Proposition~\ref{prop:feq_gamma},
working inductively
from the case $x=1$, which follows from
Equation~\eqref{eq:feq_gamma} because
$\gamma(0)=0$ and $\pounds_1(1)=0$.

Despite this logical equivalence, Proposition~\ref{prop:feq_gamma}
(or Equation~\eqref{eq:feq_gamma_sym})
is more flexible than Proposition~\ref{prop:Nielsen},
because of its formulation in terms of polynomials
allows evaluation on $p$-integral algebraic numbers.

We present some applications where $\pounds_1(X)$
admits nice evaluations modulo $p$ on those numbers,
such as
$
\pounds_1(1)\equiv 0\pmod{p},
$
$
\pounds_1(1/2)\equiv q_p(2)\pmod{p},
$
and
$
\pounds_1(-\omega)\equiv 0\pmod{p},
$
where $\omega=(-1+i\sqrt{3})/2$
(for $p>2$ and $p>3$ in the second and third case).
All these follow easily from
\begin{equation}\label{eq:Granville_pol}
\pounds_1(X)\equiv\frac{1-X^p-(1-X)^p}{p}\pmod{p},
\end{equation}
a polynomial formulation of Equation~\eqref{eq:Granville}.
Note also the symmetries
$
\pounds_1(1-X)\equiv\pounds_1(X)\pmod{p},
$
which follows from Equation~\eqref{eq:Granville_pol}, and
$
\pounds_1(1/X)\equiv -\pounds_1(X)/X^p\pmod{p}
$.

Thus, taking $X=1$ in Equation~\eqref{eq:feq_gamma_sym}
or~\eqref{eq:feq_gamma} yields
\[
\sum_{n=1}^{p-2}B_n/n
\equiv w_p\pmod{p},
\]
and taking $X=1/2$ in Equation~\eqref{eq:feq_gamma_sym} yields
\[
\sum_{n=1}^{p-2}2^n B_n/n
\equiv -q_p(2)+w_p-1\pmod{p}.
\]

Before dealing with the slightly more complicated application
where $X=-\omega$,
recall that $\gamma(X)$ may be more conveniently written as
$\gamma(X)=-\sum_{n=2}^{p-1}(B_{p-n}/n)X^n$, and that the term for
$n=2$ vanishes if $p>3$.

\begin{cor}\label{cor:sixth-roots}
For every prime $p>3$ we have
\[
\sum_{m=0}^{\lfloor (p-5)/6\rfloor}
\frac{B_{p-6m-3}}{2m+1}
\equiv
\frac{1}{4}-\frac{3}{4}\Bigl(\frac{p}{3}\Bigr)
\pmod{p}.
\]
Here $(\frac{p}{3})$ denotes a Legendre symbol, hence the sum is congruent to $1$
or $-1/2$ modulo $p$ according as $p\equiv \pm 1\pmod{3}$.
\end{cor}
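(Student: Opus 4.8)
The plan is to evaluate the symmetric functional equation~\eqref{eq:feq_gamma_sym} at $X=-\omega$, where $\omega=(-1+i\sqrt3)/2$, and to extract from the result the coefficients of $\gamma(X)$ sitting in degrees $\equiv 3\pmod 6$, using that $-\omega$ is a primitive sixth root of unity and that $1-(-\omega)=1+\omega=-\omega^{2}$. First I would recast the target sum: in the form $\gamma(X)=-\sum_{n=2}^{p-1}(B_{p-n}/n)X^{n}$, the substitution $n=6m+3$ gives $2m+1=n/3$, so the sum in the corollary equals $3S$ with
\[
S=\sum_{\substack{3\le n\le p-2\\ n\equiv 3\ (6)}}\frac{B_{p-n}}{n}.
\]
Since odd-indexed Bernoulli numbers other than $B_1$ vanish, the only even-degree term of $\gamma(X)$ is the one for $n=p-1$, with coefficient $-B_1/(p-1)\equiv -1/2$; hence $h(X):=\gamma(X)+\tfrac12 X^{p-1}$ is an odd polynomial supported on the degrees $n\in\{3,5,\dots,p-2\}$. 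For odd $n$ we have $(-\omega)^{n}+(-\omega^{2})^{n}=-(\omega^{n}+\omega^{2n})$, which is $-2$ when $3\mid n$ and $1$ otherwise; writing $T$ for the part of $\sum_{n}B_{p-n}/n$ over odd $n$ with $3\nmid n$, this gives $h(-\omega)+h(-\omega^{2})=2S-T$ and $h(1)=-(S+T)$, whence
\[
3S=\bigl(h(-\omega)+h(-\omega^{2})\bigr)-h(1)
=\bigl(\gamma(-\omega)+\gamma(-\omega^{2})\bigr)-\gamma(1)+\tfrac12\bigl((-\omega)^{p-1}+(-\omega^{2})^{p-1}\bigr)-\tfrac12 .
\]

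Next I would substitute the functional equation. Evaluating~\eqref{eq:feq_gamma_sym} at $X=-\omega$, using $\pounds_1(-\omega)\equiv 0\pmod p$ (valid for $p>3$, a consequence of~\eqref{eq:Granville_pol}), yields $\gamma(-\omega)+\gamma(-\omega^{2})\equiv w_p+1-(-\omega)^{p-1}-(-\omega^{2})^{p-1}\pmod p$, while $\gamma(1)\equiv w_p$ is the case $X=1$ already recorded. Substituting both into the identity above, the $w_p$'s cancel and one is left with $3S\equiv\tfrac12-\tfrac12\bigl((-\omega)^{p-1}+(-\omega^{2})^{p-1}\bigr)\pmod p$. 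Finally $(-\omega)^{p-1}+(-\omega^{2})^{p-1}=\omega^{p-1}+\omega^{2(p-1)}$, which by Fermat's little theorem equals $2$ when $p\equiv 1\pmod 3$ (then $\omega\in\F_p^{\ast}$) and $\omega+\omega^{2}=-1$ when $p\equiv 2\pmod 3$ (then $\omega^{p}=\omega^{2}$ in $\F_{p^{2}}$), so $3S\equiv -\tfrac12$ or $3S\equiv 1$ accordingly; that is $3S\equiv\tfrac14-\tfrac34\bigl(\tfrac p3\bigr)$, which is the assertion.

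The individual steps are routine; the points demanding care are the bookkeeping of the single even-degree term $n=p-1$ together with the convention $B_1=-1/2$, and the legitimacy of evaluating an identity of $\F_p[X]$ at $-\omega$. The latter is harmless: \eqref{eq:feq_gamma_sym} may be specialised along any $\F_p$-algebra homomorphism, in particular along $X\mapsto-\omega$, whether $\omega$ lies in $\F_p$ or in $\F_{p^{2}}$, and in the split-less case $\gamma(-\omega^{2})$ is just the Frobenius conjugate of $\gamma(-\omega)$, so the symmetric combinations land in $\F_p$ as they must. Equivalently one could run a full sixth-root-of-unity filter on $\gamma(X)$ and invoke the oddness of $h$ to collapse the six evaluations to those at $1,\ -\omega,\ -\omega^{2}$; the computation sketched above is a streamlined version of that.
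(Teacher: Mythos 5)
Your proof is correct and follows essentially the same route as the paper's: both evaluate the functional equation for $\gamma$ at the primitive sixth roots of unity, use $\pounds_1(-\omega)\equiv 0$ and $\gamma(1)=w_p$, and finish by computing $\omega^{p-1}+\omega^{1-p}$ via the Legendre symbol. The only difference is organizational: you apply the symmetric equation~\eqref{eq:feq_gamma_sym} at the single point $X=-\omega$ and exploit the oddness of $\gamma(X)+\tfrac{1}{2}X^{p-1}$ to collapse to a three-point filter, whereas the paper runs the full six-term alternating root-of-unity filter and pairs the six evaluations via~\eqref{eq:feq_gamma} --- the underlying arithmetic is identical.
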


\begin{proof}
Let $\omega=(-1+i\sqrt{3})/2$, hence $-\omega$ and $-\omega^{-1}$ are the primitive
complex sixth roots of unity.
We will evaluate Equation~\eqref{eq:feq_gamma}
on $X=-\omega^{-1}=\omega+1$ and its reciprocal (which equals its complex conjugate).
Because
$\sum_{j=0}^5(-1)^j(-\omega)^{jn}$ equals $6$ if $n\equiv 3\pmod{6}$
and zero otherwise,
we have
\begin{align*}
\gamma(1)
&
-\gamma(-\omega)
+\gamma(\omega^{-1})
-\gamma(-1)+\gamma(\omega)-\gamma(-\omega^{-1})
\\&=
-6\sum_{\substack{3\le n\le p-1,\\ n\equiv 3\pmod{6}}}
\frac{B_{p-n}}{n}
=
-2\sum_{m=0}^{\lfloor (p-5)/6\rfloor}
\frac{B_{p-6m-3}}{2m+1}
\end{align*}

Because
$\pounds_1(-\omega)\equiv\pounds_1(-\omega^{-1})\equiv 0\pmod{p}$
as noted earlier,
according to Equation~\eqref{eq:feq_gamma} we have
$
\gamma(\omega)-\gamma(-\omega^{-1})
=
(-\omega^{-1})^{p-1}-w_p-1
$,
and
$
\gamma(\omega^{-1})-\gamma(-\omega)
=
(-\omega)^{p-1}-w_p-1
$.
Equation~\eqref{eq:feq_gamma} also implies
$\gamma(1)=w_p$ and $\gamma(-1)=-w_p-1$.
In conclusion, we find
\begin{align*}
\gamma(1)
-\gamma(-\omega)
+\gamma(\omega^{-1})
-\gamma(-1)+\gamma(\omega)-\gamma(-\omega^{-1})
&=
\omega^{p-1}
+\omega^{1-p}
-1
\\&=
-\frac{1}{2}+\frac{3}{2}\Bigl(\frac{p}{3}\Bigr),
\end{align*}
and the desired congruence follows.
\end{proof}

For our final application of the functional equation(s) for $\gamma(X)$
we need the congruence
$\pounds_1(i)+\pounds_1(-i)\equiv -q_p(2)\pmod{p}$,
which can be proved as follows starting from Equation~\eqref{eq:Granville_pol}, with congruences taking place in the ring of algebraic integers:
\begin{align*}
\pounds_1(i)+\pounds_1(-i)
&\equiv
\frac{2-(1-i)^p-(1+i)^p}{p}
\pmod{p},
\\&=
\frac{2-2\cdot 2^{(p-1)/2}\cdot (-1)^{(p^2-1)/8}}{p}
\\&=
-q_p(2)
+
\frac{\bigl(2^{(p-1)/2}-(-1)^{(p^2-1)/8}\bigr)^2}{p}
\\&\equiv
-q_p(2)\pmod{p},
\end{align*}
where the last congruence follows from
$2^{(p-1)/2}\equiv (-1)^{(p^2-1)/8}\pmod{p}$,
due to the quadratic character of $2$.

\begin{cor}\label{cor:eighth-roots}
For every prime $p>3$ we have
\[
-\sum_{n=3}^{p-2}\frac{B_{p-n}}{n}\cdot (-1)^{(n^2-1)/8}\cdot 2^{(n+1)/2}
\equiv
q_{p}(2)+2 w_p+
3\frac{1-(-1)^{(p-1)/2}}{2}
\pmod{p}.
\]
\end{cor}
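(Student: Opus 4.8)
The plan is to follow the template of the proof of Corollary~\ref{cor:sixth-roots}, this time evaluating the functional equation~\eqref{eq:feq_gamma} at the points $1+i$ and $1-i$, which under the shift $X\mapsto X-1$ land on the fourth roots of unity $i$ and $-i$. First I would note that only odd $n$ contribute to the sum in the statement: for even $n$ with $3\le n\le p-2$ the index $p-n$ is odd and $\ge 3$, so $B_{p-n}=0$, which also makes the otherwise ill-defined factor $(-1)^{(n^2-1)/8}$ harmless. For odd $n$ one has the elementary identity $(-1)^{(n^2-1)/8}\,2^{(n+1)/2}=(1+i)^n+(1-i)^n$; since $(1+i)^8=16$ and $2^{(n+9)/2}=16\cdot 2^{(n+1)/2}$ while the sign is $8$-periodic in $n$, it suffices to check this for $n=1,3,5,7$. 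Because $\gamma(X)=-\sum_{k=2}^{p-1}(B_{p-k}/k)X^k$ with $B_{p-2}=0$ for $p>3$ and $-B_1/(p-1)\equiv -1/2\pmod p$, one gets $-\sum_{n=3}^{p-2}(B_{p-n}/n)X^n=\gamma(X)+\frac12 X^{p-1}$ in $\F_p[X]$. Evaluating this at $X=1+i$ and $X=1-i$ and adding shows that the left-hand side of the claimed congruence equals $\gamma(1+i)+\gamma(1-i)+\frac12\bigl((1+i)^{p-1}+(1-i)^{p-1}\bigr)$.

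Next I apply Equation~\eqref{eq:feq_gamma} at $X=1+i$ and at $X=1-i$, obtaining $\gamma(i)-\gamma(1+i)=\pounds_1(1+i)+(1+i)^{p-1}-w_p-1$ and $\gamma(-i)-\gamma(1-i)=\pounds_1(1-i)+(1-i)^{p-1}-w_p-1$. Adding these, and using that $\gamma(X)+X^{p-1}/2$ is odd, so that $\gamma(i)+\gamma(-i)=-i^{p-1}$, yields $\gamma(1+i)+\gamma(1-i)=-i^{p-1}-\bigl(\pounds_1(1+i)+\pounds_1(1-i)\bigr)-\bigl((1+i)^{p-1}+(1-i)^{p-1}\bigr)+2w_p+2$. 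Now the symmetry $\pounds_1(1-X)\equiv\pounds_1(X)\pmod p$ gives $\pounds_1(1+i)+\pounds_1(1-i)\equiv\pounds_1(-i)+\pounds_1(i)\equiv -q_p(2)\pmod p$, the last congruence being the one established just before the statement. Putting these together, the left-hand side of the corollary is congruent modulo $p$ to $-i^{p-1}+q_p(2)-\frac12\bigl((1+i)^{p-1}+(1-i)^{p-1}\bigr)+2w_p+2$.

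It then remains to evaluate $i^{p-1}=(-1)^{(p-1)/2}$ and $(1+i)^{p-1}+(1-i)^{p-1}$ modulo $p$, which I would do with the Frobenius congruence $(1\pm i)^p\equiv 1\pm i^p\pmod p$ in $\Z[i]$, together with $i^p=(-1)^{(p-1)/2}i$ and the invertibility of $1\pm i$ modulo $p$ (their product is $2$). This gives $(1+i)^{p-1}\equiv(1-i)^{p-1}\equiv 1$ when $p\equiv 1\pmod 4$, and $(1+i)^{p-1}\equiv -i$, $(1-i)^{p-1}\equiv i$ when $p\equiv 3\pmod 4$; hence $(1+i)^{p-1}+(1-i)^{p-1}$ is $\equiv 2$ or $\equiv 0$ accordingly, while $-i^{p-1}$ is $-1$ or $1$. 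Substituting into the expression above produces $q_p(2)+2w_p$ when $p\equiv 1\pmod 4$ and $q_p(2)+2w_p+3$ when $p\equiv 3\pmod 4$, which is precisely $q_p(2)+2w_p+3(1-(-1)^{(p-1)/2})/2$. No step here is conceptually hard; the main thing to be careful about is the bookkeeping of the powers of $i$ and of $2$, the four base cases for the identity $(-1)^{(n^2-1)/8}2^{(n+1)/2}=(1+i)^n+(1-i)^n$, and separating the two residues of $p$ modulo $4$ consistently throughout.
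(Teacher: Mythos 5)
Your proof is correct and follows essentially the same route as the paper: the paper evaluates the symmetric form \eqref{eq:feq_gamma_sym} at $X=\pm i$, whereas you evaluate \eqref{eq:feq_gamma} at $X=1\pm i$ and invoke $\pounds_1(1-X)\equiv\pounds_1(X)$, which is the same computation in a trivially different packaging, relying on the same key inputs ($\gamma(X)+X^{p-1}/2$ odd, $\pounds_1(i)+\pounds_1(-i)\equiv -q_p(2)$, and the identity $(1+i)^n+(1-i)^n=(-1)^{(n^2-1)/8}2^{(n+1)/2}$ for odd $n$). Your explicit remarks that the even-$n$ terms vanish because $B_{p-n}=0$ and that $\gamma(X)+\tfrac12 X^{p-1}=-\sum_{n=3}^{p-2}(B_{p-n}/n)X^n$ are correct and merely make precise what the paper leaves implicit.
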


\begin{proof}
Similarly to the proof of Corollary~\ref{cor:sixth-roots},
we will evaluate Equation~\eqref{eq:feq_gamma_sym}
on $X=i$ and $X=-i$, and add together the resulting equations.

We first deal with the quantities at the left-hand side of  Equation~\eqref{eq:feq_gamma_sym}.
Because $\gamma(X)+\gamma(-X)=-X^{p-1}$ we have
$\gamma(i)+\gamma(-i)=(-1)^{(p+1)/2}$.
Next, because
\[
(1-i)^n+(1+i)^n
=2^{-n/2}\left(\biggl(\frac{1-i}{\sqrt{2}}\biggr)^n+\biggl(\frac{1-i}{\sqrt{2}}\biggr)^n\right)
=(-1)^{(n^2-1)/8}\cdot 2^{(n+1)/2}
\]
for any odd integer $n$, we find
\begin{align*}
\gamma(1-i)
&+\gamma(1+i)
+\frac{1}{2}(1-i)^{p-1}+\frac{1}{2}(1+i)^{p-1}
\\&=
-\sum_{n=3}^{p-2}\frac{B_{p-n}}{n}(1-i)^n
-\sum_{n=3}^{p-2}\frac{B_{p-n}}{n}(1+i)^n
\\&=
-\sum_{n=3}^{p-2}\frac{B_{p-n}}{n}\cdot (-1)^{(n^2-1)/8}\cdot 2^{(n+1)/2}.
\end{align*}

Turning to the right-hand side of Equation~\eqref{eq:feq_gamma_sym},
we have
$\pounds_1(i)+\pounds_1(-i)\equiv -q_p(2)\pmod{p}$
as noted earlier, and
$(\pm i)^{p-1}=(-1)^{(p-1)/2}$.
Furthermore, because
$(1\pm i)^p\equiv 1\pm i^p=1\pm(-1)^{(p-1)/2}i\pmod{p}$
we find
\[
(1-i)^{p-1}+(1+i)^{p-1}\equiv
1+(-1)^{(p-1)/2}\pmod{p}.
\]
Therefore,
evaluating the right-hand side of Equation~\eqref{eq:feq_gamma_sym}
on $X=i$ and $X=-i$ and adding the results yields
\[
q_p(2)
+2w_p+1
-3(-1)^{(p-1)/2}.
\]
Equating this to the sum of the left-hand side of Equation~\eqref{eq:feq_gamma_sym}
evaluated on $X=i$ and $X=-i$, as discussed earlier,
yields the desired conclusion.
\end{proof}

\bibliography{References}

\end{document}